\author[Raghavan]{Dilip Raghavan}
\thanks{The author was partially supported by the Singapore Ministry of Education's research grant number A-8001467-00-00.}
\address[Raghavan]{Department of Mathematics\\
National University of Singapore\\
Singapore 119076.}
\email{\href{dilip.raghavan@protonmail.com}{dilip.raghavan@protonmail.com}}
\urladdr{\url{https://dilip-raghavan.github.io/}}
\date{\today}
\subjclass[2020]{03E17, 03E05, 54D80}
\keywords{Q-point, dominating family, Tukey order, Rudin-Keisler order}
\title{Q-points in the Tukey order}
\def\polhk#1{\setbox0=\hbox{#1}{\ooalign{\hidewidth
    \lower1.5ex\hbox{`}\hidewidth\crcr\unhbox0}}}
\newtheorem{Theorem}{Theorem}[section]
\newtheorem{Lemma}[Theorem]{Lemma}
\newtheorem{Cor}[Theorem]{Corollary}
\theoremstyle{definition}
\newtheorem{Def}[Theorem]{Definition}
\theoremstyle{remark}
\newcommand{\cc}{\mathfrak{c}}
\newcommand{\dd}{{\mathfrak{d}}}
\renewcommand{\[}{\left[}
\renewcommand{\]}{\right]}
\newcommand{\lc}{\left|}
\newcommand{\rc}{\right|}
\DeclareMathOperator{\cov}{cov}
\newcommand{\MMM}{\mathcal{M}}
\newcommand{\UUU}{{\mathcal{U}}}
\newcommand{\VVV}{{\mathcal{V}}}
\newcommand{\FFF}{{\mathcal{F}}}
\newcommand{\seq}[4]{\left\langle {#1}_{#2}: #2 #3 #4 \right\rangle}
\newcommand{\pc}[2]{{\[#1\]}^{#2}}
\begin{document}
\begin{abstract}
 Q-points are cofinal in the RK-ordering under several mild hypotheses.
\end{abstract}
\maketitle
\section{Introduction} \label{sec:intro}
The purpose of this note is to address a question that has been considered in two recent papers, namely the existence of Tukey maximal Q-points.
This was asked in~\cite{arXiv:2304.07214} and addressed in \cite{arXiv:2404.02379}, where it was proved by a forcing argument that Tukey maximal Q-points may consistently exist.
We show here that fairly standard constructions using mild hypotheses yield many Tukey maximal Q-points.
If $\dd = {\aleph}_{1}$ or if there are infinitely many pairwise RK-non-isomorphic selective ultrafilters, then the Q-points are cofinal in the RK ordering.
In particular, there are ${2}^{\cc}$ pairwise RK-incomparable Tukey maximal Q-points.

For general facts about the Tukey theory of directed sets we refer the reader to \cite{cofinal} or \cite{suslincombdic}.
For the specific case of ultrafilters on $\omega$, see, for example, \cite{tukey} or \cite{nextbestpaper}.
The order structure of P-points in the RK and Tukey ordering is considered in \cite{pomegaembed}, \cite{longchain}, \cite{pchains}, \cite{lowerbounds}, among other places.
Consistency results can be found in \cite{arXiv:2408.04446}, \cite{smallu}. 
Our notation is standard.
We refer to \cite{sanu-survey} for any undefined terms.
\section{Q-points are cofinal}
\begin{Def} \label{def:ip}
 An \emph{interval partition or IP} is a sequence $I = \seq{i}{n}{\in}{\omega} \in {\omega}^{\omega}$ such that ${i}_{0} = 0$ and $\forall n \in \omega\[{i}_{n} < {i}_{n+1}\]$.
 
 Given an IP $I$ and $n \in \omega$, ${I}_{n}$ denotes $\[ {i}_{n}, {i}_{n+1} \right) = \{l \in \omega: {i}_{n} \leq l < {i}_{n+1} \}$.
\end{Def}
Recall that an ultrafilter $\UUU$ on $\omega$ is a \emph{Q-point} if for every finite-to-one function $f: \omega \rightarrow \omega$, there exists $A \in \UUU$ such that $f$ is one-to-one on $A$.
It is well known (see e.g.\@ \cite{BJ}) that an ultrafilter $\UUU$ on $\omega$ is a Q-point if and only if for every IP $I$, there exists $A \in \UUU$ such that $\forall k \in \omega\[\lc {I}_{k} \cap A \rc \leq 1\]$.

Recall that $\dd$ is the minimal cardinality of a dominating family of functions in ${\omega}^{\omega}$.
It is well-known (see e.g.\@ \cite{blasssmall}) that $\dd$ is the minimal $\kappa$ such that there exists a sequence $\seq{I}{\alpha}{<}{\kappa}$ of IPs such that for every IP $I$, there exists $\alpha < \kappa$ such that $\forall l \in \omega \exists k \in \omega\[{I}_{k} \subseteq {I}^{\alpha}_{l}\]$.
This is the characterization of $\dd$ which is useful below.
Recall that Canjar~\cite{MR0993747} showed that if $\dd = {\aleph}_{1}$, then any filter on $\omega$ that is generated by fewer than ${2}^{{\aleph}_{0}}$ sets can be extended to a Q-point.
\begin{Lemma} \label{lem:choice}
 Suppose $\seq{Y}{n}{\in}{\omega}$ is a sequence such that:
 \begin{enumerate}[series=choice]
  \item
  ${Y}_{n} \in \pc{\omega}{{\aleph}_{0}}$, for all $n \in \omega$;
  \item
  ${Y}_{n} \cap {Y}_{m} = \emptyset$, for all $n < m < \omega$;
 \end{enumerate}
 Suppose $I = \seq{i}{n}{\in}{\omega}$ is an IP.
 Then there exists $X \in \pc{\omega}{{\aleph}_{0}}$ such that:
 \begin{enumerate}[resume=choice]
  \item
  $\lc X \cap {Y}_{n} \rc = {\aleph}_{0}$, for all $n \in \omega$;
  \item
  $\lc X \cap {I}_{l} \rc \leq 1$, for all $l \in \omega$.
 \end{enumerate}
\end{Lemma}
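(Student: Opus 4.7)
The statement is essentially a careful bookkeeping exercise: we want to pick one element per interval so as to hit each $Y_n$ infinitely often. My plan is a straightforward diagonal construction.

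First, I would translate the constraint into the language of intervals. For each $n \in \omega$, let
\[
 A_n = \{ l \in \omega : Y_n \cap I_l \neq \emptyset \}.
\]
Since the intervals $I_l$ partition $\omega$ and each $Y_n$ is infinite, each $A_n$ is infinite. Note the $A_n$'s need not be disjoint, but this will not matter.

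Next I would construct pairwise disjoint infinite sets $B_n \subseteq A_n$ by a diagonal enumeration. Fix a bijection $\omega \to \omega \times \omega$, $k \mapsto (n_k, j_k)$, in which every pair appears. Build a sequence $\langle l_k : k \in \omega \rangle$ recursively: at step $k$, having chosen $l_0, \dots, l_{k-1}$, pick $l_k \in A_{n_k} \setminus \{l_0, \dots, l_{k-1}\}$, which is possible because $A_{n_k}$ is infinite. Set $B_n = \{l_k : n_k = n\}$. By the enumeration, each $B_n$ is infinite, and by the recursive freshness condition the $B_n$'s are pairwise disjoint.

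Finally, for each $l \in \bigcup_n B_n$, let $n(l)$ be the unique $n$ with $l \in B_n$, and pick any $x_l \in Y_{n(l)} \cap I_l$ (nonempty because $l \in A_{n(l)}$). Set $X = \{ x_l : l \in \bigcup_n B_n \}$. To verify the conclusions: since the $x_l$ lie in pairwise disjoint intervals they are distinct, so $X$ is infinite; condition (4) holds because $X \cap I_l \subseteq \{x_l\}$ for $l \in \bigcup_n B_n$ and $X \cap I_l = \emptyset$ otherwise; condition (3) holds because $X \cap Y_n \supseteq \{ x_l : l \in B_n\}$, a set of the same cardinality as the infinite set $B_n$ since the $x_l$ are distinct.

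There is no serious obstacle here; the only thing to watch is that the diagonalization be organized so that each $B_n$ ends up infinite while simultaneously keeping them disjoint, which is handled by the standard trick of enumerating $\omega \times \omega$ with each coordinate appearing cofinally. The disjointness of the $Y_n$'s plays essentially no role in the argument, though it is natural to include it in the hypothesis because this lemma will be applied in contexts where the $Y_n$'s arise as disjoint pieces.
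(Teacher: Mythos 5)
Your proposal is correct and is essentially the same argument as the paper's: the paper defines $Z_n = \{l : I_l \cap Y_n \neq \emptyset\}$ (your $A_n$), takes pairwise disjoint infinite $T_n \subseteq Z_n$ (your $B_n$, whose existence the paper simply asserts where you spell out the diagonal enumeration), and picks one point of $I_l \cap Y_n$ for each $l \in T_n$. Your closing remark that the disjointness of the $Y_n$'s is not actually used in the lemma is also accurate.
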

\begin{proof}
 Define ${Z}_{n} = \{l \in \omega: {I}_{l} \cap {Y}_{n} \neq \emptyset\}$, for all $n \in \omega$.
 ${Z}_{n} \in \pc{\omega}{{\aleph}_{0}}$ because ${Y}_{n} \in \pc{\omega}{{\aleph}_{0}}$ and because $I$ is an IP.
 Let $\seq{T}{n}{\in}{\omega}$ be such that:
 \begin{enumerate}[resume=choice]
  \item
  ${T}_{n} \in \pc{{Z}_{n}}{{\aleph}_{0}}$, for all $n \in \omega$;
  \item
  ${T}_{m} \cap {T}_{n} = \emptyset$, for all $m < n < \omega$.
 \end{enumerate}
 For each $l \in {T}_{n}$, since ${I}_{l} \cap {Y}_{n} \neq \emptyset$, choose ${k}_{l, n} \in {I}_{l} \cap {Y}_{n}$.
 Let
 \begin{align*}
  X = \{{k}_{l, n}: l \in {T}_{n} \wedge n \in \omega\}.
 \end{align*}
 For each $n \in \omega$, $\{{k}_{l, n}: l \in {T}_{n}\} \subseteq X \cap {Y}_{n}$, and if $l \neq l'$, then ${k}_{l, n} \neq {k}_{l', n}$ because ${k}_{l, n} \in {I}_{l}$, ${k}_{l', n} \in {I}_{l'}$, and ${I}_{l} \cap {I}_{l'} = \emptyset$.
 Therefore, (3) is satisfied.
 For (4), suppose that $n, n' \in \omega$, $l \in {T}_{n}$, $l' \in {T}_{n'}$, and that ${k}_{l, n}, {k}_{l', n'} \in {I}_{j}$, for some $j \in \omega$.
 Then since ${k}_{l, n} \in {I}_{j} \cap {I}_{l}$ and ${k}_{l', n'} \in {I}_{j} \cap {I}_{l'}$, $l = j = l'$.
 Since $l \in {T}_{n} \cap {T}_{n'}$, $n = n'$, whence ${k}_{l, n} = {k}_{l', n'}$.
 Hence (4) is satisfied and $X$ is as needed.
\end{proof}
\begin{Lemma} \label{lem:induction}
 Suppose $\langle {I}^{\alpha}: \alpha < {\omega}_{1} \rangle$ is a sequence of IPs.
 Suppose also that $\seq{X}{n}{\in}{\omega}$ is a sequence such that:
 \begin{enumerate}[series=induction]
  \item
  ${X}_{n} \in \pc{\omega}{{\aleph}_{0}}$, for all $n \in \omega$;
  \item
  ${X}_{m} \cap {X}_{n} = \emptyset$, for all $m < n < \omega$;
  \item
  $\omega = {\bigcup}_{n \in \omega}{{X}_{n}}$.
 \end{enumerate}
 There exists a filter $\FFF$ on $\omega$ such that:
 \begin{enumerate}[resume=induction]
  \item
  $\forall Z \in \FFF \forall n \in \omega\[\lc Z \cap {X}_{n} \rc = {\aleph}_{0}\]$;
  \item
  $\forall \alpha < {\omega}_{1} \exists Z \in \FFF\forall l \in \omega\[\lc {I}^{\alpha}_{l} \cap Z \rc \leq 1\]$.
 \end{enumerate}
\end{Lemma}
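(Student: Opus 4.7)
The plan is to construct $\FFF$ by a transfinite recursion of length $\omega_1$, producing at each stage $\alpha < \omega_1$ a set $Z_\alpha \subseteq \omega$ that is selective for $I^\alpha$ (so $|Z_\alpha \cap I^\alpha_l| \leq 1$ for every $l$), while maintaining the invariant that the filter $\FFF_\alpha$ generated by $\{Z_\beta : \beta < \alpha\}$ satisfies (4). The final filter $\FFF := \bigcup_{\alpha < \omega_1} \FFF_\alpha$ then satisfies (4) (every element lies in some $\FFF_\alpha$) and (5) (with $Z_\alpha$ serving as the witness for $\alpha$).

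Limit stages require no work: any element of $\FFF_\alpha$ at a limit $\alpha$ is a superset of a finite intersection of the prior generators, hence already lies in some $\FFF_{\beta+1}$ with $\beta < \alpha$, and so inherits property (4). The content is at successor stages. At stage $\alpha$, enumerate $\alpha = \{\beta_i : i < \omega\}$ and form the decreasing sequence $W_k := Z_{\beta_0} \cap \cdots \cap Z_{\beta_k}$, whose finite intersections generate $\FFF_\alpha$; by induction each $W_k \cap X_n$ is infinite. The goal is to produce $Z_\alpha$ which is selective for $I^\alpha$ and which satisfies $|W_k \cap Z_\alpha \cap X_n| = \aleph_0$ for every $(k, n) \in \omega \times \omega$; this suffices because any element of $\FFF_{\alpha+1}$ is a superset of some $W_k \cap Z_\alpha$.

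The strategy is to package these countably many demands into a single application of Lemma~\ref{lem:choice}. I construct a pairwise disjoint family $\{T^n_k : k, n \in \omega\}$ of infinite sets with $T^n_k \subseteq W_k \cap X_n$ as follows. For each fixed $n$, proceed inductively over $m \in \omega$, picking distinct elements $t^{n,m}_k \in W_m \cap X_n$ for $0 \leq k \leq m$, each different from all previously chosen (possible since $W_m \cap X_n$ is infinite), and set $T^n_k := \{t^{n,m}_k : m \geq k\}$. Since $t^{n,m}_k \in W_m \subseteq W_k$, we get $T^n_k \subseteq W_k \cap X_n$; the $T^n_k$ for fixed $n$ are pairwise disjoint by construction, and disjointness across different $n$ is automatic from the disjointness of the $X_n$. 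Re-enumerating $\{T^n_k : k, n \in \omega\}$ as a single sequence $\seq{Y}{j}{\in}{\omega}$ and applying Lemma~\ref{lem:choice} with IP $I^\alpha$ yields $Z_\alpha$ with $|Z_\alpha \cap T^n_k| = \aleph_0$ for all $k, n$ and $|Z_\alpha \cap I^\alpha_l| \leq 1$ for all $l$. Then $|W_k \cap Z_\alpha \cap X_n| \geq |Z_\alpha \cap T^n_k| = \aleph_0$, so (4) persists in $\FFF_{\alpha+1}$.

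The main obstacle is the simultaneous preservation of (4) against countably many ``infinite intersection'' constraints at a single successor step. The disjoint refinement $\{T^n_k\}$ is the key device that encodes this countable list of constraints into the single hypothesis of Lemma~\ref{lem:choice}, thereby reducing the successor step to a single application of the already-proved lemma.
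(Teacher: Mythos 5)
Your proof is correct. It shares the paper's overall architecture -- a transfinite recursion of length $\omega_1$ producing one generator $Z_\alpha$ per $I^\alpha$, with Lemma~\ref{lem:choice} applied once at each stage to a countable pairwise disjoint family of infinite sets -- but the device at the successor step is genuinely different. The paper strengthens the inductive hypothesis: it requires the generators to form a $\subseteq^*$-decreasing tower on each piece, i.e.\ $Z_\beta \cap X_n \subseteq^* Z_\alpha \cap X_n$ for $\alpha < \beta$ and all $n$. At stage $\alpha$ it then takes a single pseudo-intersection $Y_n \subseteq X_n$ of the countable chain $\{Z_\xi \cap X_n : \xi < \alpha\}$ for each $n$, feeds the $\omega$-many disjoint sets $\{Y_n\}$ to Lemma~\ref{lem:choice}, and defines the final filter via the $\subseteq^*$ relation rather than as the generated filter. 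You dispense with the tower entirely, keep only the invariant that the generated filter satisfies (4), and instead handle the $\omega\times\omega$-many constraints $W_k \cap X_n$ by a disjoint refinement $\{T^n_k\}$. Both are standard diagonalization tactics; the paper's tower buys a cleaner inductive statement (only $\omega$-many sets to hand to Lemma~\ref{lem:choice}, and no need to discuss finite intersections of generators), while your refinement avoids having to verify the almost-containment condition (7) and works directly with the generated filter. The only points worth a sentence in a final write-up are the trivial stages ($\alpha$ finite or zero, where the enumeration $\alpha = \{\beta_i : i < \omega\}$ as written does not literally apply) and the observation that the generated filter is proper because every finite intersection of generators meets every $X_n$ infinitely; neither is a gap.
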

\begin{proof}
 For any $Z \subseteq \omega$, let $Z\[n\]$ denote $Z \cap {X}_{n}$.
 Build by induction a family $\{{Z}_{\alpha}: \alpha < {\omega}_{1}\} \subseteq \pc{\omega}{{\aleph}_{0}}$ satisfying the following:
 \begin{enumerate}[resume=induction]
  \item
  $\forall \alpha < {\omega}_{1} \forall n \in \omega\[\lc {Z}_{\alpha}\[n\] \rc = {\aleph}_{0}\]$;
  \item
  $\forall \alpha < \beta < {\omega}_{1} \forall n \in \omega\[{Z}_{\beta}\[n\] \; {\subseteq}^{\ast} \; {Z}_{\alpha}\[n\]\]$;
  \item
  $\forall \alpha < {\omega}_{1}\forall l \in \omega\[\lc {I}^{\alpha}_{l} \cap {Z}_{\alpha} \rc \leq 1\]$.
 \end{enumerate}
 Suppose for a moment that this has been accomplished.
 Let
 \begin{align*}
  \FFF = \left\{ Z \subseteq \omega: \exists \alpha < {\omega}_{1}\forall n \in \omega\[{Z}_{\alpha}\[n\] \; {\subseteq}^{\ast} \; Z\[n\]\] \right\}.
 \end{align*}
 Then it is clear that $\FFF$ is a filter on $\omega$.
 If $Z \in \FFF$ as witnessed by $\alpha < {\omega}_{1}$, then for each $n \in \omega$, ${Z}_{\alpha}\[n\] \; {\subseteq}^{\ast} \; Z\[n\]$, so by (6), (4) holds.
 Since for each $\alpha < {\omega}_{1}$, ${Z}_{\alpha} \in \FFF$, (8) gives (5).
 Hence $\FFF$ is a required.
 
 To build $\{{Z}_{\alpha}: \alpha < {\omega}_{1}\}$, proceed by induction.
 Fix $\alpha < {\omega}_{1}$ and suppose $\{{Z}_{\xi}: \xi < \alpha\} \subseteq \pc{\omega}{{\aleph}_{0}}$ satisfying (6)--(8) is given.
 For each $n \in \omega$, the family $\{{Z}_{\xi}\[n\]: \xi < \alpha\} \subseteq \pc{{X}_{n}}{{\aleph}_{0}}$ satisfies $\forall \zeta < \xi < \alpha\[{Z}_{\xi}\[n\] \; {\subseteq}^{\ast} \; {Z}_{\zeta}\[n\]\]$.
 As $\alpha$ is countable, find ${Y}_{n} \in \pc{{X}_{n}}{{\aleph}_{0}}$ such that $\forall \xi < \alpha\[{Y}_{n} \; {\subseteq}^{\ast} \; {Z}_{\xi}\[n\]\]$.
 Note that ${Y}_{m} \cap {Y}_{n} = \emptyset$ for $n \neq m$ because ${X}_{m} \cap {X}_{n} = \emptyset$.
 Applying Lemma \ref{lem:choice}, find $Z \in \pc{\omega}{{\aleph}_{0}}$ satisfying (3) and (4) of Lemma \ref{lem:choice} for ${I}^{\alpha}$.
 Define ${Z}_{\alpha} = Z \cap \left( {\bigcup}_{m \in \omega}{{Y}_{m}} \right)$.
 It is clear that (8) follows from (4) of Lemma \ref{lem:choice}.
 Note that for any $n \in \omega$, ${Z}_{\alpha}\[n\] = Z \cap {Y}_{n}$.
 Therefore, (6) follows from (3) of Lemma \ref{lem:choice} and (7) follows from the choice of ${Y}_{n}$.
 Thus ${Z}_{\alpha}$ has all the required properties.
 This concludes the construction and the proof.
\end{proof}
\begin{Theorem} \label{thm:q-cofinal}
 Assume $\dd = {\aleph}_{1}$.
 Let $\UUU$ be any ultrafilter on $\omega$ and let $f: \omega \rightarrow \omega$ be such that $\lc {f}^{-1}(\{n\}) \rc = {\aleph}_{0}$, for all $n \in \omega$.
 There there exists $\VVV$ such that:
 \begin{enumerate}
  \item
  $\VVV$ is a Q-point;
  \item
  $f$ witnesses that $\UUU \; {\leq}_{RK} \; \VVV$.
 \end{enumerate}
\end{Theorem}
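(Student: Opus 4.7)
The plan is to apply Lemma \ref{lem:induction} to the fiber partition of $f$ together with any dominating sequence of IPs (available because $\dd = {\aleph}_{1}$), extend the resulting filter by the pullbacks of $\UUU$ along $f$, and take $\VVV$ to be any ultrafilter extension.

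First I would set ${X}_{n} = {f}^{-1}(\{n\})$; by hypothesis the ${X}_{n}$ form a partition of $\omega$ into infinite pieces. Fix a dominating sequence $\langle {I}^{\alpha}: \alpha < {\omega}_{1} \rangle$ of IPs with the property given in the excerpt, and apply Lemma \ref{lem:induction} to $\{{X}_{n}\}$ and $\{{I}^{\alpha}\}$ to obtain a filter $\FFF$ satisfying (4) and (5) of that lemma. Let $\GGG$ be the filter generated by $\FFF \cup \{{f}^{-1}(A): A \in \UUU\}$; the finite intersection property follows from (4), since for any $Z \in \FFF$ and $A \in \UUU$ the set $Z \cap {f}^{-1}(A) = {\bigcup}_{n \in A}{(Z \cap {X}_{n})}$ is an infinite union of infinite sets. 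Take $\VVV$ to be any ultrafilter extending $\GGG$; clause (2) of the theorem then holds because ${f}^{-1}(A) \in \VVV$ iff $A \in \UUU$.

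The only substantive work lies in verifying clause (1), that $\VVV$ is a Q-point, and the expected obstacle is that the dominating property only controls $I^{\alpha}$-intervals, while we want control over arbitrary $J$-intervals. Given an arbitrary IP $J$, pick $\alpha$ with $\forall l \exists k\[{J}_{k} \subseteq {I}^{\alpha}_{l}\]$ and choose $Z \in \FFF$ with $\lc Z \cap {I}^{\alpha}_{l} \rc \leq 1$ for all $l$. A ${J}_{k}$ contained in a single ${I}^{\alpha}_{l}$ already satisfies $\lc Z \cap {J}_{k} \rc \leq 1$; the key observation is that no ${J}_{k}$ can straddle two consecutive ${I}^{\alpha}$-boundaries — otherwise ${J}_{k}$ would contain some whole ${I}^{\alpha}_{l+1}$, which by the dominating property contains a whole ${J}_{k'}$, forcing ${J}_{k'} \subseteq {J}_{k}$ and hence $k = k'$ and ${J}_{k} \subseteq {I}^{\alpha}_{l+1}$, a contradiction. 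Thus $\lc Z \cap {J}_{k} \rc \leq 2$ for every $k$.

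To close the remaining gap, partition $Z$ as $Z = {Z}^{(0)} \cup {Z}^{(1)}$ by placing the minimum element of each nonempty $Z \cap {J}_{k}$ into ${Z}^{(0)}$ and the second element (whenever $\lc Z \cap {J}_{k} \rc = 2$) into ${Z}^{(1)}$. Then $\lc {Z}^{(i)} \cap {J}_{k} \rc \leq 1$ for every $k$ and $i \in \{0,1\}$, and since $Z \in \VVV$ at least one of ${Z}^{(0)}, {Z}^{(1)}$ belongs to $\VVV$ and witnesses the Q-point property for $J$. As $J$ was arbitrary, $\VVV$ is a Q-point, completing the proof.
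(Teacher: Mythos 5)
Your proposal is correct and follows essentially the same route as the paper: define ${X}_{n} = {f}^{-1}(\{n\})$, apply Lemma \ref{lem:induction} to a dominating ${\omega}_{1}$-sequence of IPs, extend by the $f$-pullbacks of $\UUU$ using clause (4) for the finite intersection property, and then, for an arbitrary IP, use the dominating property to get $\lc Z \cap {J}_{k}\rc \leq 2$ and split $Z$ into the set of minima and its complement, one of which lies in the ultrafilter. Your explicit justification of the ``at most $2$'' step is exactly the argument the paper leaves as ``easily seen,'' and your ${Z}^{(0)}, {Z}^{(1)}$ are the paper's $R$ and $S$.
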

\begin{proof}
 Fix a sequence $\langle {I}^{\alpha}: \alpha < {\omega}_{1} \rangle$ of IPs so that for every IP $I$, there exists $\alpha < {\omega}_{1}$ such that $\forall l \in \omega \exists k \in \omega\[{I}_{k} \subseteq {I}^{\alpha}_{l}\]$.
 Define ${X}_{n} = {f}^{-1}(\{n\})$, for every $n \in \omega$.
 Applying Lemma \ref{lem:induction}, find a filter $\FFF$ on $\omega$ satisfying (4) and (5) of Lemma \ref{lem:induction}.
 Suppose $Z \in \FFF$ and $A \in \UUU$.
 Then ${f}^{-1}(A) = {\bigcup}_{n \in A}{{X}_{n}}$, whence $Z \cap {f}^{-1}(A) = {\bigcup}_{n \in A}{\left(Z \cap {X}_{n}\right)}$, which is infinite by (4) of Lemma \ref{lem:induction}.
 It follows that there exists an ultrafilter $\VVV$ on $\omega$ such that $\FFF \subseteq \VVV$ and $\{{f}^{-1}(A): A \in \UUU\} \subseteq \VVV$.
 Now (2) is immediate by the choice of $\VVV$.
 To see (1), let $I$ be any IP.
 Let $\alpha < {\omega}_{1}$ be such that $\forall l \in \omega \exists k \in \omega\[{I}_{k} \subseteq {I}^{\alpha}_{l}\]$.
 By (5) of Lemma \ref{lem:induction}, there exists $Z \in \FFF \subseteq \VVV$ such that $\forall l \in \omega\[\lc {I}^{\alpha}_{l} \cap Z \rc \leq 1\]$.
 It is easily seen that $\forall n \in \omega\[\lc {I}_{n} \cap Z \rc \leq 2\]$.
 Define $R = \{\min\left( {I}_{n} \cap Z \right): n \in \omega \wedge {I}_{n} \cap Z \neq \emptyset\}$ and $S = Z \setminus R$.
 It is clear that $\forall n \in \omega\[\lc {I}_{n} \cap R \rc \leq 1\]$ and that $\forall n \in \omega\[\lc {I}_{n} \cap S \rc \leq 1\]$.
 $R \in \VVV$ or $S \in \VVV$ because $Z \in \VVV$ and $\VVV$ is an ultrafilter.
 Therefore, $\VVV$ is a Q-point (see \cite[Lemma 7.1]{souvssu} for a similar argument).
\end{proof}
\begin{Cor} \label{cor:manyq-maximal}
 Assume $\dd = {\aleph}_{1}$.
 There exist $\seq{\VVV}{\alpha}{<}{{2}^{{2}^{{\aleph}_{0}}}}$ such that:
 \begin{enumerate}[series=manyq-maximal]
  \item
  ${\VVV}_{\alpha}$ is a Q-point for every $\alpha < {2}^{{2}^{{\aleph}_{0}}}$;
  \item
  ${\VVV}_{\alpha} \; {\not\leq}_{RK} \; {\VVV}_{\beta}$, for $\alpha, \beta < {2}^{{2}^{{\aleph}_{0}}}$ with $\alpha \neq \beta$;
  \item
  ${\VVV}_{\alpha}$ is Tukey maximal, for all $\alpha < {2}^{{2}^{{\aleph}_{0}}}$.
 \end{enumerate}
\end{Cor}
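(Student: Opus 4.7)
The plan is in two stages: first produce ${2}^{\cc}$-many Tukey-maximal Q-points, then extract a pairwise RK-incomparable subfamily of the same size.

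For stage one, I invoke the classical ZFC fact that there are ${2}^{\cc}$-many Tukey-maximal ultrafilters on $\omega$: one can exhibit in ZFC a filter ${\FFF}_{0}$ of character $\cc$ every ultrafilter extension of which is Tukey-maximal, and ${\FFF}_{0}$ admits ${2}^{\cc}$-many ultrafilter completions by a standard Boolean-algebraic argument. Fix any $f \colon \omega \to \omega$ with $|{f}^{-1}(\{n\})| = {\aleph}_{0}$ for all $n$. For each Tukey-maximal $\UUU$, Theorem~\ref{thm:q-cofinal} produces a Q-point $\WW$ with $\UUU \; {\leq}_{RK} \; \WW$; since ${\leq}_{RK}$ implies ${\leq}_{T}$, this $\WW$ is itself Tukey-maximal. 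Each such $\WW$ has at most $\cc$ RK-predecessors, so at most $\cc$-many starting ultrafilters yield the same $\WW$; this procedure thus produces a family $\{{\WW}_{\gamma} \colon \gamma < {2}^{\cc}\}$ of Tukey-maximal Q-points.

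For stage two, every ultrafilter has at most $\cc$ RK-predecessors (each is of the form $g(\VVV)$ for some $g \in {\omega}^{\omega}$), so every ${\leq}_{RK}$-chain in $\{{\WW}_{\gamma}\}$ has cardinality at most $\cc$. By the infinite version of Mirsky's theorem, $\{{\WW}_{\gamma}\}$ partitions into at most $\cc$-many ${\leq}_{RK}$-antichains. K\"onig's theorem gives $\cf({2}^{\cc}) > \cc$, so a sum of $\cc$-many cardinals each strictly less than ${2}^{\cc}$ stays strictly below ${2}^{\cc}$; hence at least one of these antichains has cardinality ${2}^{\cc}$. Reindexing this antichain as $\{{\VVV}_{\alpha} \colon \alpha < {2}^{\cc}\}$ gives the desired family satisfying (1)--(3).

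The main obstacle lies in stage one: verifying in ZFC that ${2}^{\cc}$-many Tukey-maximal ultrafilters exist. This requires constructing a ``Tukey-forcing'' filter and counting its ultrafilter extensions, both standard but slightly technical. The rest of the argument is just cardinal-arithmetic bookkeeping atop Theorem~\ref{thm:q-cofinal}.
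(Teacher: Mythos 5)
Your first stage is essentially the paper's: it also starts from a family of ${2}^{\cc}$ distinct Tukey-maximal ultrafilters (asserted without proof, much as you assert Isbell's construction), pushes each up to a Q-point via Theorem~\ref{thm:q-cofinal}, and uses the fact that Tukey-maximality is inherited upward along ${\leq}_{RK}$. (The paper gets injectivity of ${\UUU}_{\alpha} \mapsto {\VVV}_{\alpha}$ for free, since ${\UUU}_{\alpha} = \{A : {f}^{-1}(A) \in {\VVV}_{\alpha}\}$ is recoverable from ${\VVV}_{\alpha}$ and the fixed $f$; your ``at most $\cc$-to-one'' count also suffices.)

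The gap is in stage two. The ``infinite version of Mirsky's theorem'' you invoke --- that a poset all of whose chains have size at most $\kappa$ is a union of at most $\kappa$ antichains --- is false for infinite $\kappa$. Sierpi\'nski's example: fix distinct reals $\{{r}_{\xi} : \xi < {\omega}_{1}\}$ and declare $\xi \prec \eta$ iff $\xi < \eta$ and ${r}_{\xi} < {r}_{\eta}$. Every chain and every antichain is countable (an uncountable one would yield a strictly monotone ${\omega}_{1}$-sequence of reals), so this poset of size ${\aleph}_{1}$ is not a union of countably many antichains. Worse for your purposes, in this example every element has only countably many predecessors, which is exactly the structural feature your family enjoys (each ${\WW}_{\gamma}$ has at most $\cc$ RK-predecessors); so no predecessor-counting rescues the Mirsky step, and the K\"onig/cofinality bookkeeping that follows has nothing to stand on. (A secondary slip: a chain in which every element has at most $\cc$ predecessors can have size ${\cc}^{+}$ --- take one of order type ${\cc}^{+}$ --- so even your chain bound is off by one cardinal.) What the paper does instead is apply the Hajnal free-set theorem for set mappings directly: with $F(\alpha) = \{\beta : {\VVV}_{\beta} \; {\leq}_{RK} \; {\VVV}_{\alpha}\}$ of size at most $\cc$ on an index set of size ${2}^{\cc}$, that theorem produces a single free set of size ${2}^{\cc}$, which is precisely the desired RK-antichain. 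That free-set theorem, not a chain-decomposition argument, is the missing ingredient; with it your proof closes, without it stage two fails.
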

\begin{proof}
 There exists a family of ultrafilters $\{{\UUU}_{\alpha}: \alpha < {2}^{{2}^{{\aleph}_{0}}}\}$ on $\omega$ such that each ${\UUU}_{\alpha}$ is Tukey maximal and ${\UUU}_{\alpha} \neq {\UUU}_{\beta}$, for $\alpha \neq \beta$.
 Fix $f: \omega \rightarrow \omega$ such that $\lc {f}^{-1}(\{n\}) \rc = {\aleph}_{0}$.
 By Theorem \ref{thm:q-cofinal}, find ${\VVV}_{\alpha}$ satisfying (1) and (2) of Theorem \ref{thm:q-cofinal} for $f$ and ${\UUU}_{\alpha}$.
 Each ${\VVV}_{\alpha}$ is a Q-point which is Tukey maximal as it is RK-above ${\UUU}_{\alpha}$.
 Since ${\UUU}_{\alpha} = \{A \subseteq \omega: {f}^{-1}(A) \in {\VVV}_{\alpha}\}$, it follows that ${\VVV}_{\alpha} \neq {\VVV}_{\beta}$, whenever $\alpha \neq \beta$.
 For each $\alpha < {2}^{{2}^{{\aleph}_{0}}}$, let $F(\alpha) = \{\beta < {2}^{{2}^{{\aleph}_{0}}}: {\VVV}_{\beta} \; {\leq}_{RK} \; {\VVV}_{\alpha}\}$.
 Since $\lc F(\alpha) \rc \leq {2}^{{\aleph}_{0}}$, by a fundamental theorem on set mappings (see e.g.\@ \cite{partitionbible}), there exists $X \subseteq {2}^{{2}^{{\aleph}_{0}}}$ with $\lc X \rc = {2}^{{2}^{{\aleph}_{0}}}$ such that for each $\alpha \in X$, $F(\alpha) \cap X = \{\alpha\}$.
 Therefore, $\seq{\VVV}{\alpha}{\in}{X}$ is as needed.
\end{proof}
It is easy to see that the construction in Theorem \ref{thm:q-cofinal} can also be carried out if there are infinitely many pairwise RK-non-isomorphic selective ultrafilters.
It is well-known that this happens if, for example, $\cov(\MMM) = \cc$.
We refer the reader to \cite{bulletin} for a discussion of the differences between constructions from hypotheses of the form $\dd = {\aleph}_{1}$ and those from hypotheses of the form $\cov(\MMM) = \cc$.
We give a few details below.
\begin{Lemma} \label{lem:noniso}
 Suppose $\UUU$ and $\VVV$ are selective ultrafilters on $\omega$ with $\UUU \; {\not\equiv}_{RK} \; \VVV$.
 For any IP $I$, there exist $X \in \UUU$ and $Y \in \VVV$ such that:
 \begin{enumerate}
  \item
  $\forall n \in \omega\[\lc {I}_{n} \cap X \rc \leq 1\]$ and $\forall n \in \omega\[\lc {I}_{n} \cap Y \rc \leq 1\]$;
  \item
  $\left\{n \in \omega: {I}_{n} \cap X \neq \emptyset \right\} \cap \left\{n \in \omega: {I}_{n} \cap Y \neq \emptyset \right\} = \emptyset$.
 \end{enumerate}
\end{Lemma}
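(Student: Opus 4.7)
The plan is to reduce the problem to an elementary separation statement about the RK-images of $\UUU$ and $\VVV$ along the IP. Define the interval-index map $\pi : \omega \to \omega$ by $\pi(k) = n$ iff $k \in {I}_{n}$. Since $I$ is an IP, $\pi$ is finite-to-one, and the required condition (2) is exactly that $\pi[X] \cap \pi[Y] = \emptyset$. Let $\pi(\UUU) = \{A \subseteq \omega : {\pi}^{-1}(A) \in \UUU\}$ and $\pi(\VVV)$ denote the usual RK-images.

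The first main step is to show $\pi(\UUU) \neq \pi(\VVV)$ as ultrafilters on $\omega$. Selective ultrafilters are Q-points, so because $\pi$ is finite-to-one there exist ${X}_{0} \in \UUU$ and ${Y}_{0} \in \VVV$ on which $\pi$ is one-to-one. The map $\pi \restrict {X}_{0}$ is then a bijection from ${X}_{0}$ onto $\pi[{X}_{0}] \in \pi(\UUU)$, which witnesses $\UUU \; {\equiv}_{RK} \; \pi(\UUU)$; symmetrically, $\VVV \; {\equiv}_{RK} \; \pi(\VVV)$. If $\pi(\UUU) = \pi(\VVV)$, transitivity of ${\equiv}_{RK}$ would yield $\UUU \; {\equiv}_{RK} \; \VVV$, contradicting the hypothesis. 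Hence there exists $A \subseteq \omega$ with $A \in \pi(\UUU)$ and $\omega \setminus A \in \pi(\VVV)$; unpacking, $B := {\bigcup}_{n \in A}{{I}_{n}} \in \UUU$ and $C := {\bigcup}_{n \notin A}{{I}_{n}} \in \VVV$, and $B \cap C = \emptyset$.

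The second step is the Q-point property applied directly to $I$: choose $X' \in \UUU$ and $Y' \in \VVV$ with $\lc X' \cap {I}_{n} \rc \leq 1$ and $\lc Y' \cap {I}_{n} \rc \leq 1$ for every $n \in \omega$, and set $X := X' \cap B \in \UUU$ and $Y := Y' \cap C \in \VVV$. Condition (1) is inherited from $X'$ and $Y'$ by shrinking. For condition (2), if ${I}_{n} \cap X \neq \emptyset$ then ${I}_{n}$ meets $B$, and since the intervals are pairwise disjoint this forces $n \in A$; symmetrically, every interval meeting $Y$ has index in $\omega \setminus A$, so the two index sets are disjoint.

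The step I expect to be the main obstacle is the separation claim $\pi(\UUU) \neq \pi(\VVV)$. It relies essentially on the Q-point hypothesis, which upgrades the one-sided RK-reduction witnessed by $\pi$ to a full RK-equivalence via the section provided by the injectivity set ${X}_{0}$. Without the Q-point property one only has $\pi(\UUU) \; {\leq}_{RK} \; \UUU$, which is insufficient to derive the contradiction from $\pi(\UUU) = \pi(\VVV)$; this is precisely the place where the hypothesis that both $\UUU$ and $\VVV$ are selective (and in particular Q-points) is used.
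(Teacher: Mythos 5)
Your proof is correct and follows essentially the same route as the paper: both push $\UUU$ and $\VVV$ forward along the interval-index map, separate the (necessarily distinct, by RK-equivalence via selectivity) images by disjoint sets, and intersect the pulled-back separating sets with selectors obtained from the Q-point property. Your write-up even makes explicit the justification of $\UUU \; {\equiv}_{RK} \; \pi(\UUU)$ that the paper leaves implicit.
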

\begin{proof}
 Find $A \in \UUU$ and $B \in \VVV$ such that $\forall n \in \omega\[\lc {I}_{n} \cap A \rc \leq 1\]$ and
 \begin{align*}
  \forall n \in \omega\[\lc {I}_{n} \cap B \rc \leq 1\].
 \end{align*}
 Let $f: \omega \rightarrow \omega$ be the function such that $f''{I}_{n} = \{n\}$, for all $n \in \omega$.
 Let ${\UUU}^{\ast} = \{U \subseteq \omega: {f}^{-1}(U) \in \UUU\}$ and let ${\VVV}^{\ast} = \{V \subseteq \omega: {f}^{-1}(V) \in \VVV\}$.
 Then ${\UUU}^{\ast} \neq {\VVV}^{\ast}$ because ${\UUU}^{\ast} \; {\equiv}_{RK} \; \UUU \; {\not\equiv}_{RK} \; \VVV \; {\equiv}_{RK} \; {\VVV}^{\ast}$.
 So there exist $U \in {\UUU}^{\ast}$ and $V \in {\VVV}^{\ast}$ with $U \cap V = \emptyset$.
 Then $X = A \cap {f}^{-1}(U) \in \UUU$ and $Y = B \cap {f}^{-1}(V) \in \VVV$ are as required.
\end{proof}
\begin{Cor} \label{cor:selectives}
 Assume there exists a family $\{{\UUU}_{n}:  n \in \omega\}$ such that:
 \begin{enumerate}[series=selectives]
  \item
  ${\UUU}_{n}$ is a selective ultrafilter on $\omega$;
  \item
  for each $n, m \in \omega$, if $n \neq m$, then ${\UUU}_{m} \; {\not\equiv}_{RK} \; {\UUU}_{n}$.
 \end{enumerate}
 Let $\UUU$ be any ultrafilter on $\omega$ and let $f: \omega \rightarrow \omega$ be such that $\lc {f}^{-1}(\{n\}) \rc = {\aleph}_{0}$, for all $n \in \omega$.
 There there exists $\VVV$ such that:
 \begin{enumerate}[resume=selectives]
  \item
  $\VVV$ is a Q-point;
  \item
  $f$ witnesses that $\UUU \; {\leq}_{RK} \; \VVV$.
 \end{enumerate}
\end{Cor}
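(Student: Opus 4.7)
The plan is to build $\VVV$ as a $\UUU$-sum of the selectives $\UUU_n$ transferred onto the fibres of $f$.  Set $X_n := f^{-1}(\{n\})$, pick bijections $\phi_n : \omega \to X_n$, and let $\UUU^*_n := \{ \phi_n''A : A \in \UUU_n \}$, a selective ultrafilter on $X_n$.  Define
\[
\VVV := \left\{ Z \subseteq \omega : \{ n \in \omega : Z \cap X_n \in \UUU^*_n \} \in \UUU \right\}.
\]
The standard checks that $\VVV$ is an ultrafilter and that $f^{-1}(A) \in \VVV$ for every $A \in \UUU$ are routine, using that $f^{-1}(A) \cap X_n$ equals $X_n$ or $\emptyset$ according as $n \in A$ or not.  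This immediately yields (4).

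The substantive task is (3), that $\VVV$ is a Q-point.  Given an IP $I$, I would define $g_n : X_n \to \omega$ by $g_n(x) = k$ iff $x \in {I}_{k}$.  Since $g_n$ is finite-to-one and $\UUU^*_n$ is a Q-point on $X_n$, some $C_n \in \UUU^*_n$ has $g_n \restrict C_n$ one-to-one.  The push-forward $\UUU^g_n := \{V \subseteq \omega : g_n^{-1}(V) \in \UUU^*_n\}$ is an ultrafilter on $\omega$, and the one-to-oneness of $g_n \restrict C_n$ gives $\UUU^g_n \; {\equiv}_{RK} \; \UUU^*_n \; {\equiv}_{RK} \; \UUU_n$; in particular each $\UUU^g_n$ is selective, hence a P-point, and $\{\UUU^g_n : n \in \omega\}$ is pairwise RK-non-equivalent, and a fortiori pairwise distinct.

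The key step will be a simultaneous diagonalisation producing pairwise disjoint ${V}'_{n} \in \UUU^g_n$.  For each $i \neq j$ I would pick $W_{i,j} \in \UUU^g_i \setminus \UUU^g_j$; then both $W_{i,j}$ and $\omega \setminus W_{j,i}$ lie in $\UUU^g_i$ for every $j \neq i$.  The P-point property of $\UUU^g_i$ yields $V_i \in \UUU^g_i$ with $V_i \; {\subseteq}^{\ast} \; W_{i,j}$ and $V_i \; {\subseteq}^{\ast} \; \omega \setminus W_{j,i}$ for all $j \neq i$, forcing $V_i \cap V_j$ to be finite whenever $i \neq j$.  An inductive thinning ${V}'_{n} := V_n \setminus \bigcup_{m < n} V_m$ then keeps ${V}'_{n}$ in $\UUU^g_n$ and gives honest pairwise disjointness.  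Setting $B_n := g_n^{-1}({V}'_{n}) \cap C_n \in \UUU^*_n$ and $Z := \bigcup_n B_n$, I obtain $Z \in \VVV$ because $Z \cap X_n = B_n$; and $\lc Z \cap {I}_{k} \rc \leq 1$ because at most one $n$ satisfies $k \in {V}'_{n}$, and on $C_n$ at most one element lies in any ${I}_{k}$.

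The main obstacle is the countable coordination: for merely distinct ultrafilters one can only pairwise-separate finitely many at a time, whereas the P-point property of selectives (implicit in the hypothesis) upgrades this to a countable separation in one step.  This is precisely where the hypothesis of infinitely many pairwise RK-non-equivalent selectives enters, replacing the role of a $\dd = \aleph_1$-sized dominating family of IPs used in Theorem~\ref{thm:q-cofinal}.
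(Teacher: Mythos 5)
Your proposal is correct and is essentially the paper's argument: the same $\UUU$-sum construction, with the Q-point property of each selective giving one point per block of $I$, RK-non-equivalence giving pairwise separation of the push-forwards along the block map, and the P-point property upgrading this to a countable disjointification. The only difference is packaging — you perform the disjointification downstairs on the push-forward ultrafilters $\UUU^g_n$, whereas the paper's Lemma~\ref{lem:noniso} does the pairwise separation via the same push-forward and then carries out the P-point diagonalization and finite truncations upstairs.
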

\begin{proof}
 Define ${X}_{n} = {f}^{-1}(\{n\})$.
 Let ${\VVV}_{n}$ be an RK-isomorphic copy of ${\UUU}_{n}$ with ${X}_{n} \in {\VVV}_{n}$, and define
 \begin{align*}
  \VVV = \left\{A \subseteq \omega: \left\{n \in \omega: A \cap {X}_{n} \in {\VVV}_{n} \right\} \in \UUU\right\}.
 \end{align*}
 $\VVV$ is as required.
 To see this, suppose $I$ is any IP.
 Using Lemma \ref{lem:noniso}, fix ${A}_{m, n} \in {\VVV}_{m}$ and ${B}_{m, n} \in {\VVV}_{n}$ satisfying (1) and (2) of Lemma \ref{lem:noniso}, for all $m < n < \omega$.
 For each $m \in \omega$, find ${C}_{m} \in {\VVV}_{m}$ with ${C}_{m} \subseteq {A}_{m, m+1}$ and $\forall m < n < \omega\[{C}_{m} \; {\subseteq}^{\ast} \; {A}_{m, n}\]$.
 Note $\forall k \in \omega\[\lc {I}_{k} \cap {C}_{m} \rc \leq 1\]$.
 For each $n \in \omega$, define ${D}_{n} = {C}_{n} \cap \left( {\bigcap}_{m < n}{{B}_{m, n}} \right) \in {\VVV}_{n}$, with $\bigcap \emptyset$ taken to be $\omega$.
 For each $n \in \omega$, find ${k}_{n} \in \omega$ such that $\forall m < n\[{D}_{m} \setminus {A}_{m, n} \subseteq {\bigcup}_{l \leq {k}_{n}}{{I}_{l}}\]$.
 Define ${E}_{n} = {D}_{n} \setminus \left( {\bigcup}_{l \leq {k}_{n}}{{I}_{l}} \right) \in {\VVV}_{n}$ and define $E = {\bigcup}_{n \in \omega}{\left( {E}_{n} \cap {X}_{n} \right)}$.
 It is easily seen that $E \in \VVV$ and that $\forall l \in \omega\[\lc {I}_{l} \cap E \rc \leq 1\]$.
 Therefore, (3) is satisfied and it is clear from the definition of $\VVV$ that (4) holds as well (see \cite{sanu-survey} for a similar argument).
\end{proof}
\def\polhk#1{\setbox0=\hbox{#1}{\ooalign{\hidewidth
  \lower1.5ex\hbox{`}\hidewidth\crcr\unhbox0}}}
\providecommand{\bysame}{\leavevmode\hbox to3em{\hrulefill}\thinspace}
\providecommand{\MR}{\relax\ifhmode\unskip\space\fi MR }
% \MRhref is called by the amsart/book/proc definition of \MR.
\providecommand{\MRhref}[2]{%
  \href{http://www.ams.org/mathscinet-getitem?mr=#1}{#2}
}
\providecommand{\href}[2]{#2}

%\bibliographystyle{amsplain}
%\bibliography{Bibliography}

\begin{thebibliography}{10}

\bibitem{BJ}
T.~Bartoszy{\'n}ski and H.~Judah, \emph{Set theory: On the structure of the
  real line}, A K Peters Ltd., Wellesley, MA, 1995.

\bibitem{arXiv:2304.07214}
T.~Benhamou and N.~Dobrinen, \emph{Cofinal types of ultrafilters over
  measurable cardinals}, arXiv:2304.07214, preprint (2023), 34 pages.

\bibitem{arXiv:2404.02379}
T.~Benhamou and F.~Wu, \emph{Diamond principles and {T}ukey-top ultrafilters on
  a countable set}, arXiv:2404.02379, preprint (2024), 30 pages.

\bibitem{blasssmall}
A.~Blass, \emph{Combinatorial cardinal characteristics of the continuum},
  Handbook of set theory. {V}ols. 1, 2, 3, Springer, Dordrecht, 2010,
  pp.~395--489.

\bibitem{nextbestpaper}
A.~Blass, N.~Dobrinen, and D.~Raghavan, \emph{The next best thing to a
  {P}-point}, J. Symb. Log. \textbf{80} (2015), no.~3, 866--900.

\bibitem{MR0993747}
R.~M. Canjar, \emph{On the generic existence of special ultrafilters}, Proc.
  Amer. Math. Soc. \textbf{110} (1990), no.~1, 233--241.

\bibitem{partitionbible}
P.~Erd{\H{o}}s, A.~Hajnal, A.~M{\'a}t{\'e}, and R.~Rado, \emph{Combinatorial
  set theory: partition relations for cardinals}, Studies in Logic and the
  Foundations of Mathematics, vol. 106, North-Holland Publishing Co.,
  Amsterdam, 1984.

\bibitem{longchain}
B.~Kuzeljevic and D.~Raghavan, \emph{A long chain of {P}-points}, J. Math. Log.
  \textbf{18} (2018), no.~1, 1850004, 38.

\bibitem{lowerbounds}
B.~Kuzeljevic, D.~Raghavan, and J.~L. Verner, \emph{Lower bounds of sets of
  {P}-points}, Notre Dame J. Form. Log. \textbf{64} (2023), no.~3, 317--327.

\bibitem{sanu-survey}
Borisa Kuzeljevic and Dilip Raghavan, \emph{Order structure of {P}-point
  ultrafilters and their relatives}, arXiv:2404.03238, submitted (2024), 29 pp.

\bibitem{bulletin}
D.~Raghavan, \emph{Almost disjoint families and diagonalizations of length
  continuum}, Bull. Symbolic Logic \textbf{16} (2010), no.~2, 240--260.

\bibitem{pomegaembed}
D.~Raghavan and S.~Shelah, \emph{On embedding certain partial orders into the
  {P}-points under {R}udin--{K}eisler and {T}ukey reducibility}, Trans. Amer.
  Math. Soc. \textbf{369} (2017), no.~6, 4433--4455.

\bibitem{souvssu}
D.~Raghavan and J.~Stepr{\=a}ns, \emph{Stable ordered-union versus selective
  ultrafilters}, arXiv:2302.05539, submitted (2023), 43 pp.

\bibitem{tukey}
D.~Raghavan and S.~Todorcevic, \emph{Cofinal types of ultrafilters}, Ann. Pure
  Appl. Logic \textbf{163} (2012), no.~3, 185--199.

\bibitem{pchains}
D.~Raghavan and J.~L. Verner, \emph{Chains of {P}-points}, Canad. Math. Bull.
  \textbf{62} (2019), no.~4, 856--868.

\bibitem{smallu}
Dilip Raghavan and Saharon Shelah, \emph{A small ultrafilter number at smaller
  cardinals}, Arch. Math. Logic \textbf{59} (2020), no.~3-4, 325--334.

\bibitem{arXiv:2408.04446}
Dilip Raghavan and Juris Stepr{\=a}ns, \emph{Adding ultrafilters to {S}helah's
  model for no nowhere dense ultrafilters}, arXiv:2408.04446, preprint (2024),
  17 pages.

\bibitem{suslincombdic}
Dilip Raghavan and Stevo Todorcevic, \emph{Combinatorial dichotomies and
  cardinal invariants}, Math. Res. Lett. \textbf{21} (2014), no.~2, 379--401.

\bibitem{cofinal}
S.~Todorcevic, \emph{Directed sets and cofinal types}, Trans. Amer. Math. Soc.
  \textbf{290} (1985), no.~2, 711--723.

\end{thebibliography}
\end{document}